\newcommand{\C}{\mathbb{C}}
\newcommand{\R}{\mathbb{R}}
\newcommand{\SL}{\operatorname{SL}}
\newcommand{\SU}{\operatorname{SU}}
\newcommand{\id}{I}
   \newtheorem{theorem}{Theorem}[section]
   \newtheorem{lemma}[theorem]{Lemma}
   \newtheorem{defn}[theorem]{Definition}
   \newtheorem{remark}[theorem]{Remark}
\begin{document}

\date{\today}

\title[Smyth-type potentials]{Triviality of the dressing isotropy 
for a Smyth-type potential and nonclosing of the resulting CMC surfaces}

\author{Josef F. Dorfmeister}
\address{}
\email{}


\author{Wayne Rossman}
\address{}
\email{}

\maketitle

\section{Introduction}

In \cite{51} we have investigated spacelike surfaces in Minkowski 3-space
which are generated by the potential 
\[ \xi = \lambda^{-1} \begin{pmatrix} 0 & 1 \\ 
 z^{-1} & 0 \end{pmatrix} dz \; . \]
This was motivated by the quantum cohomology of $\C P^1$.
It turns out that this potential yields, 
via the loop group method \cite{49}, spacelike CMC surfaces 
in Minkowski 3-space, for which the metric is invariant under 
a 1-parameter family of isometries of the domain. In 
$\R^3$, Delaunay surfaces and 
Smyth surfaces are known to have this property. Moreover, 
these surfaces can be  defined on a (punctured) disk, i.e. 
the immersion closes around the fixed point of the rotations under 
which the metric is invariant (and, in the Smyth case, is even defined 
at the fixed point). We were therefore interested in 
finding out whether among the immersions generated 
by the potential $\xi$ above 
(which is different from the Delaunay potentials and the Smyth potentials)
there are immersions closing around $z=0$. In this note we prove, 
as already partially announced in \cite{51}, that in 
every integrable surface class for which the 
potential $\xi$ above makes sense\footnote{Following 
Kobayashi \cite{**}, we are only interested in the almost compact cases, 
and hence we are interested in the involutions C1, C2, C3 and C4 in 
that paper.  However, we omit the case C4, since there we have a situation
completely different from the other three.  The groups under consideration 
for the cases C1, C2, C3 are 
$\SU(2)$, $\SU(1,1)$, and $\SL_*(2,\R)$ (isomorphic to $\SL_2 \R$ 
under conjugation by the diagonal matrix with diagonal 
entries $\sqrt{i}^{-1}$ 
and $\sqrt{i}$), respectively.   The Gauss maps, respectively, 
go into the symmetric spaces $S^2$ equal to 
$\SU(2)$ modulo diagonal matrices, 
$H^2$ equal to $\SU(1,1)$ modulo diagonal matrices, 
and $S^{1,1}$ equal to $\SL_*(2,R)$ modulo diagonal matrices.  
Therefore, harmonic maps into these 
spaces all have normalized potentials which are arbitrary meromorphic and 
off-diagonal.}, there do not exist any closing immersions 
(Theorem \ref{2}).
The conclusion comes quite easily from the fact that the
isotropy group of the dressing action is trivial (Theorem \ref{firsttheorem}).
The proof of the latter statement covers most of this note.

The authors are grateful to Martin Guest for his support of this research, 
including many detailed discussions and insightful comments.  The authors are 
also thankful for the grant from the Japan Society for the Promotion of Science 
that brought the first author to Japan.  

\section{The DPW method}\label{sect2}

Let $G$ denote any of the real Lie groups $\SU(2)$, $\SU(1,1)$ or 
$\SL_2 \R$.
Clearly, G is a real form of $Sl_2\C$. Let
$\varLambda \SL_2\mathbb{C}_{\sigma}$ denote the 
twisted loop group of smooth maps 
from the unit circle $\mathbb{S}^1$ to $\SL_2\mathbb{C}$, where "twisted" 
means that the two diagonal elements of the image of the map are even 
functions of $\lambda \in \mathbb{S}^1$ and the 
two off-diagonal elements are odd functions of $\lambda$.  
Let $\varLambda G_{\sigma}$ denote the subgroup of maps 
from $\mathbb{S}^1$ to $G $.  
Let $\varLambda_+\SL_2\mathbb{C}_{\sigma}$, resp. 
$\varLambda_+^\mathbb{R}\SL_2\mathbb{C}_{\sigma}$, denote the subgroup of 
unnormalized, resp. normalized, positive loop groups -- in other words, 
$B \in \varLambda_+\SL_2\mathbb{C}_{\sigma}$, resp. 
$B \in \varLambda_+^\mathbb{R}\SL_2\mathbb{C}_{\sigma}$, if $B$ can be 
extended smoothly to a map defined on the unit disk (with boundary 
$\mathbb{S}^1$) and 
the diagonal matrix $B|_{\lambda=0}$ does not necessarily, resp. does 
necessarily, have positive reals on the diagonal.  

At this point, the DPW method requires an Iwasawa 
splitting relative to $\varLambda G_{\sigma}$.
It turns out that in the case $G = \SU(2)$ every element $g(\lambda)$ in 
$\varLambda \SL_2\mathbb{C}_{\sigma}$ can be 
written in the form $ g = F B$ with $F \in 
\varLambda SU(2)_{\sigma}$ and $B \in \varLambda_+\SL_2\mathbb{C}_{\sigma}$. 
In the other two cases, however, this is not true. More precisely, in the
case of $G=\SL_2 \mathbb{R}$ there is a cell that is open and dense in 
$\varLambda \SL_2\mathbb{C}_{\sigma}$ and 
on which the Iwasawa splittings exist, but 
which cannot be all of $\varLambda \SL_2\mathbb{C}_{\sigma}$.  In the 
case of $G=\SU(1,1)$ there are two open cells, 
$\mathcal{B}_1 = \varLambda G_{\sigma} \cdot  
\varLambda_+\SL_2\mathbb{C}_{\sigma}$
and $ \mathcal{B}_2 = 
\varLambda G_{\sigma} \cdot \omega \cdot 
\varLambda_+\SL_2\mathbb{C}_{\sigma}$ with
$\omega = \left( \begin{array}{cc}
0 & - \lambda \\
\lambda^{-1} & 0 \\
\end{array}
\right)$, and the union of $\mathcal{B}_1$ and $\mathcal{B}_2$
is open and dense in $\varLambda \SL_2\mathbb{C}_{\sigma}$, so 
that Iwasawa splittings 
exist for any element of $\mathcal{B}_1 \cup \mathcal{B}_2$ 
\cite{45}, \cite{48}.
With this notation the Iwasawa splitting of an element 
$L \in \mathcal{B}_1 \subset \varLambda \SL_2\mathbb{C}_{\sigma}$, 
respectively 
$L \in \mathcal{B}_2 \subset \varLambda \SL_2\mathbb{C}_{\sigma}$, is 
\[ L = F \cdot B \; , \;\;\; \text{for some} \;\; 
F \in { \varLambda \SU(1,1) }_{\sigma} \;\; \text{and} \;\; 
B \in \varLambda_+^\mathbb{R}\SL_2\mathbb{C}_{\sigma} \; , \]
respectively, 
\[ L = F \cdot \omega \cdot  B \; , \;\;\; \text{for some} \;\; 
F \in {\varLambda \SU(1,1) }_{\sigma}\;\; \text{and} \;\; 
B \in \varLambda_+^\mathbb{R}\SL_2\mathbb{C}_{\sigma} \; . \]  

Now, the DPW method can start with an equation of the form 
\[ dL = L \cdot \xi \; , \;\;\; \xi = \lambda^{-1}\begin{pmatrix}
       0&g\\ h&0    \end{pmatrix} dz
\] defined on some domain of $\mathbb{C}$ over which $g$ and $h$ 
are holomorphic and $g$ is nonzero.  
One then Iwasawa splits a solution $L$ into $L = F B$, or $F\omega B$ 
and inserts $F$ or $\omega^{-1} F \omega$ into the Sym-Bobenko formula.
In the cases $G = \SU(2)$ and $G = \SU(1,1)$ this formula is

\[ f = \frac{-i}{2H} \left[ F  
\begin{pmatrix}       1&0\\ 0&-1    \end{pmatrix} 
F^{-1} - 2 \lambda (\partial_\lambda F) F^{-1} 
\right] \] 
to obtain (for every $\lambda \in S^1$) a conformal spacelike CMC 
$H \neq 0$ immersion $f$ into Euclidean $3$-space $\mathbb{R}^3$ and 
Minkowski $3$-space $\mathbb{L}^3$ respectively.  

In the case $G = \SL_2 \R$, the Sym formula
\[ f = -i \left[ \lambda (\partial_\lambda F) F^{-1} \right]  \] 
yields a conformal timelike
constant negative Gau{\ss} curvature 
$K < 0$ immersion $f$ into $\mathbb{L}^3$.  See \cite{**}.  

To obtain the Smyth surfaces in $\R^3$, we can 
take the domain to be all of $\mathbb{C}$ and the potential to be 
\[ \xi_k = \lambda^{-1}\begin{pmatrix}
       0&1\\ c z^k & 0    \end{pmatrix} dz
     \; , \;\;\; k \in \mathbb{Z} \; , \;\;\; k \geq 0 \; , \]  
where $c \in \mathbb{C} \setminus \{ 0 \}$, and we can take 
the initial condition for the solution $L$ to be $L|_{z=0}=\id$. 

The case we will consider in this note is  

\[ \xi = \lambda^{-1}\begin{pmatrix}
       0&1\\ c z^{-1} & 0    \end{pmatrix} dz
     \;  \;\;\;  \;  \;\;\; \;  \]  
where $c \in \mathbb{C} \setminus \{ 0 \}$,
and now we cannot specify any initial condition at $z=0$.

\section{The result on dressing isotropy}
\label{section2}

As pointed out in the introduction, we consider all integrable surface classes 
for which the potential $\xi$ makes sense. In the classification of \cite{**}
these are the CMC surfaces in $\R^3$, spacelike CMC surfaces in 
Minkowsi 3-space
$\mathbb{L}^3$ and the timelike surfaces of constant negative 
Gau{\ss} curvature 
in $\mathbb{L}^3$.  The group $G$ in each case is $\SU(2)$, 
$\SU(1,1)$ and $\SL_2 \R$, respectively. However, for the claim 
and the proof of Theorem \ref{firsttheorem} this is of no importance. 
It is only important to note that our potential is 
contained in the complexification 
$p^{\C}$, where $g = k + p$ ($g$ denotes the Lie algebra in any of the three 
cases, and $k$ is the diagonal part and $p$ is the off-diagonal 
part) is the Cartan decomposition 
corresponding to the target space of the Gau{\ss} map.
In all three cases $p^{\C}$ consists of all $2 \times 2$ off-diagonal 
matrices with complex entries.

\begin{theorem}\label{firsttheorem}
Let $L$ be any solution of $dL=L \xi $ for 
the potential \[ \xi = \lambda^{-1}\begin{pmatrix}
 0&1\\ c z^{-1} &0    \end{pmatrix} dz,
\] where $c \in \C \setminus \{ 0 \}$.  Then the isotropy group of $L$ 
relative to the dressing action is $\{ \pm \id \}$.  
\end{theorem}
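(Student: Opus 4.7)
The plan is to translate the dressing isotropy condition into a system of equations on $C_+(z) := L(z)^{-1} h_+ L(z)$ and then force $C_+$ to be $\pm I$ via a recursion together with the monodromy of $L$ at $z = 0$. First I would observe, by the definitions of Section~\ref{sect2}, that $h_+$ fixes the unitary factor $F$ of $L$ precisely when $F^{-1} h_+ F$, equivalently $C_+$, lies in $\varLambda_+\SL_2\C_\sigma$ at every $z$ in the domain (the factors $B$ or $\omega B$ separating $F$ from $L$ lie in $\varLambda_+\SL_2\C_\sigma$ and drop out). Differentiating $C_+$ and using $dL = L\,\xi$ yields $\partial_z C_+ = \lambda^{-1}[C_+, P]$, with $P = \begin{pmatrix} 0 & 1 \\ cz^{-1} & 0\end{pmatrix}$. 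Expanding $C_+ = \sum_{k\ge 0} C_k(z)\lambda^k$ and invoking the twisting ($C_{2m}$ diagonal, $C_{2m+1}$ off-diagonal), matching powers of $\lambda$ produces the algebraic constraint $[C_0, P] = 0$, which forces $C_0 = \pm I$ (since $C_0$ is diagonal of unit determinant), together with the recursion $\partial_z C_k = [C_{k+1}, P]$ for $k \ge 0$.

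The main body of the argument is an induction on $m \ge 0$ showing $C_{2m+1} \equiv 0$ and $C_{2m}$ is a constant scalar multiple of $I$. Granting this up through level $2m$, the equation $[C_{2m+1}, P] = \partial_z C_{2m} = 0$ forces $C_{2m+1} = \begin{pmatrix}0 & b \\ bcz^{-1} & 0\end{pmatrix}$ for a single scalar $b(z)$, and plugging this into $\partial_z C_{2m+1} = [C_{2m+2}, P]$ with $C_{2m+2}$ diagonal collapses, via the relation $d = bcz^{-1}$, to the scalar ODE $2zb'(z) = b(z)$, whose general solution is $b(z) = \alpha\sqrt{z}$. To kill the constant $\alpha$ I would invoke the monodromy of $L$ at $z = 0$. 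Since $\xi$ has a simple pole with nilpotent residue $A_0 = c\lambda^{-1}\begin{pmatrix}0 & 0 \\ 1 & 0\end{pmatrix}$, Fuchsian analysis gives a local form $L(z) = H(z) z^{A_0}$ with $H$ analytic at $0$, and the monodromy loop $M = e^{2\pi i A_0} = I + 2\pi i c\lambda^{-1}\begin{pmatrix}0 & 0 \\ 1 & 0\end{pmatrix}$. The isotropy is invariant under $h_+ \mapsto M^{-n} h_+ M^n$ for every integer $n$ (because the analytic continuation of $C_+$ around $0$ is the same construction with $h_+$ replaced by $M^{-1} h_+ M$), and imposing that each conjugate $M^{-n} h_+ M^n$ still has no negative $\lambda$-powers forces, at the $\lambda^{-1}$-level and for all $n$, the conditions $h_0 = \pm I$ and $(h_1)_{12} = 0$. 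Comparing this with the $\sqrt{z}$ form of $b$ produced by the recursion, and tracking how each $(1,2)$-entry of $C_{2m+1}$ transforms under the deck transformation on the universal cover of the punctured disk, forces $\alpha = 0$, so $C_{2m+1} \equiv 0$. With $b \equiv 0$, the remaining entry equation $b' = (C_{2m+2})_{11} - (C_{2m+2})_{22} = 0$ yields $C_{2m+2} = e_{m+1} I$ scalar, and the tracelessness of $[C_{2m+3}, P]$ then forces $e_{m+1}$ to be $z$-constant, closing the induction.

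Once the induction is in hand, $C_+ = f(\lambda) I$ with $f(\lambda) = \sum_{m \ge 0} e_m \lambda^{2m}$; the determinant condition $f(\lambda)^2 = 1$ together with $f(0) = \pm 1$ and continuity in $\lambda$ force $f \equiv \pm 1$, so $C_+ \equiv \pm I$ and $h_+ = L C_+ L^{-1} = \pm I$. The step I anticipate as the main obstacle is the monodromy analysis: in the convention $dL = L\xi$ the deck transformation sends $L \mapsto M L$, so the analytic continuation of $C_+$ is $L^{-1}(M^{-1} h_+ M) L$ rather than simply $M^{-1} C_+ M$, and one must track carefully how the two sources of multivaluedness — the $\sqrt z$-branching of $b$ produced by the recursion and the $M$-conjugation action on $h_+$ — interact to exclude every nonzero $\alpha$. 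The remainder of the proof is essentially $\lambda$-expansion bookkeeping and the trace manipulations at each recursive step.
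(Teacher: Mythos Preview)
Your recursion is essentially the paper's argument in different packaging: both routes arrive at the scalar ODE $2zb'=b$ for the relevant $\lambda$-coefficient of the $(1,2)$-entry of $W_+=C_+$, hence $b=\alpha\sqrt{z}$, and the whole theorem reduces to excluding $\alpha\neq 0$. The paper does this in one line by using the explicit Frobenius solution $\tilde L=e^{D\log z}\,P$ with $P$ holomorphic at $0$ and $D$ nilpotent of index~$2$: then $W_+=P^{-1}e^{-D\log z}\,h\,e^{D\log z}P$ has entries of the form $f_1(z)+f_2(z)\log z+f_3(z)(\log z)^2$ with each $f_j$ holomorphic at $0$, and $\sqrt{z}$ is manifestly not of this shape.

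Your monodromy step, by contrast, has a genuine gap. The assertion that ``each conjugate $M^{-n}h_+M^{n}$ has no negative $\lambda$-powers'' is not what the isotropy condition gives: isotropy says $L^{-1}hL\in\varLambda_+\SL_2\C_\sigma$, not $h\in\varLambda_+\SL_2\C_\sigma$, and since $M=I+2\pi i c\lambda^{-1}E_{21}$ itself carries a $\lambda^{-1}$, conjugation by $M$ does not preserve $\varLambda_+$ even if you first establish $h\in\varLambda_+$ (which the paper does separately, as its Lemma~\ref{isotropylemma}). What the deck transformation actually gives is that $\tau^*C_+=L^{-1}(M^{-1}hM)L$ is again in $\varLambda_+$; running your induction for the new isotropy element $M^{-1}hM$ produces some $\tilde\alpha$, and comparing with the branch change $\sqrt{z}\mapsto-\sqrt{z}$ only yields $\tilde\alpha=-\alpha$, not $\alpha=0$. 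So the sketch ``tracking how \dots\ forces $\alpha=0$'' does not close as stated.

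The fix is already implicit in what you wrote. Your Fuchsian local form (with the ordering corrected for the convention $dL=L\xi$: one needs $L=z^{A_0}H(z)$, singular factor on the left, so that the monodromy $M=e^{2\pi iA_0}$ acts by left multiplication) gives
\[
C_+=H(z)^{-1}\bigl(I-A_0\log z\bigr)\,h\,\bigl(I+A_0\log z\bigr)\,H(z),
\]
a matrix whose entries are quadratic polynomials in $\log z$ with coefficients holomorphic at $z=0$. Hence every $\lambda$-coefficient $C_k(z)$, and in particular your $b(z)$, has this form, which forces $\alpha=0$. That is exactly the paper's endgame; once you use the nilpotency of $A_0$ \emph{directly} on $C_+$ rather than passing through the $M$-conjugation on $h$, your induction and the paper's proof coincide.
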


We now consider how to prove this theorem.  
Solving $dL=L \xi$, where $\xi$ is as in the theorem 
and $c \in \mathbb{C} \setminus \{ 0 \}$,
we have solutions of the form 
\[
L=\begin{pmatrix}
     X^{\prime}&\lambda^{-1}X\\
     \lambda Y^{\prime}&Y
     \end{pmatrix} \; , \]
and $X$ and $Y$ satisfy
\begin{equation}\label{eq:xy}
z X^{\prime\prime}-\lambda^{-2} c X=0 \; , \;\;\;  
z Y^{\prime\prime}-\lambda^{-2} c Y=0 \; . 
\end{equation}
The Frobenius method leads us to one particular solution 
\begin{equation}\label{smythp}
 \tilde L = \hat L \cdot P\; ,
\end{equation}
where
\[ \hat L=\begin{pmatrix}
       1&0\\
       \lambda^{-1}c \log z&1
       \end{pmatrix}=e^{\log z\cdot D} \; , \;\;\; 
   D=\begin{pmatrix}
          0&0\\
          \lambda^{-1}c&0 
         \end{pmatrix} \; , \]
and, for appropriate constants $\eta_{ij}$,  
\begin{equation*}
P=\begin{pmatrix}
           1&0\\
           -\lambda\eta_{2,1}-\lambda^{-1}c&1
           \end{pmatrix}
\cdot 
\begin{pmatrix}
     \sum_{j=0}^{\infty}\left( j+1\right)\eta_{1,j}z^j&\lambda^{-1} z 
     \sum_{j=0}^{\infty}\eta_{1,j}z^j\\
    \lambda\left\{\sum_{j=1}^{\infty}j\eta_{2,j}z^{j-1}+
    \lambda^{-2}c\sum_{j=0}^{\infty}\eta_{1,j}z^j\right\}&
    \sum_{j=0}^{\infty}\eta_{2,j}z^j                                  
             \end{pmatrix}\;.
\end{equation*}

Note that in this solution we can assume $\eta_{1,0} = \eta_{2,0} = 1$, 
i.e. that $\lim_{z\rightarrow 0} P = \id$.  

Let us consider the isotropy group of $\tilde{L}$: 

\begin{defn}\label{isotropy} An element 
$h \in \Lambda \SL_2\mathbb{C}_\sigma$ is in the {\em isotropy group} 
of $\tilde L$ if there exists a possibly $z$-dependent function $W_+ \in 
\Lambda_+ \SL_2\mathbb{C}_\sigma$ so that 
\[ h\tilde{L}=\tilde{L} W_+ \; . \]
\end{defn}

\begin{lemma}\label{isotropylemma}
If $h$ is in the isotropy group of $\tilde L$, then 
\begin{equation}\label{eq:hsl2}
h \in \varLambda_+\SL_2\mathbb{C}_{\sigma}\;,
\end{equation} 
and $\lim_{\lambda\rightarrow 0}h= \pm \id$, 
which implies either 
$h$ or $-h$ lies in $\varLambda_+^\mathbb{R}\SL_2\mathbb{C}_{\sigma}$.  
\end{lemma}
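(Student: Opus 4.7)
The plan is to exploit the explicit factorization $\tilde L=\hat L\cdot P$ to convert the isotropy equation into a polynomial-in-$\log z$ identity, separate its coefficients using the monodromy $z\mapsto e^{2\pi i}z$, and pass to the limit $z\to 0$ where $P\to\id$. Throughout, set $l=\log z$; since $D^2=0$, we have $\hat L=\id+lD$ and $\hat L^{-1}=\id-lD$, so a direct expansion gives
\[
W_+ \;=\; \tilde L^{-1}h\tilde L \;=\; P^{-1}\bigl(h - l\,[D,h] - l^2\,DhD\bigr)P \;=\; A(z,\lambda) + l\,B(z,\lambda) + l^2\,C(z,\lambda),
\]
where $A=P^{-1}hP$, $B=-P^{-1}[D,h]P$, and $C=-P^{-1}DhD\,P$ depend on $z$ only through $P$.

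The first main step is to separate these three coefficients. Going once around $z=0$ sends $l$ to $l+2\pi i$ but leaves $P$ (a single-valued power series in $z$) fixed, and a short check using $D^2=0$ confirms that the displayed formula for $W_+$ is consistent with the monodromy $\tilde L\mapsto(\id+2\pi i\,D)\tilde L$. Evaluating $W_+$ on three distinct sheets over a fixed $z$ thus produces three elements of $\varLambda_+\SL_2\mathbb{C}_\sigma$ of the form $A+l_kB+l_k^2 C$; the associated $3\times 3$ Vandermonde matrix in the distinct $l_k$ is invertible over $\mathbb{C}$, so $A(z,\cdot)$, $B(z,\cdot)$, $C(z,\cdot)$ are $\mathbb{C}$-linear combinations of loops that are $\lambda$-positive (holomorphic on $|\lambda|<1$) for every $z$ in the punctured disk.

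Next, pass to $z\to 0$. In a suitable topology on the loop groups in which $\lambda$-positivity is closed, $P(z,\cdot)\to\id$, yielding $h,\,[D,h],\,DhD\in\varLambda_+\SL_2\mathbb{C}_\sigma$. The first inclusion is the desired $h\in\varLambda_+\SL_2\mathbb{C}_\sigma$. For the second claim, expand $h=h_0+\lambda h_1+\lambda^2 h_2+\cdots$ with $h_0$ diagonal by the twisting, and note $D=\lambda^{-1}c\,E$ with $E=\begin{pmatrix}0&0\\1&0\end{pmatrix}$. The $\lambda^{-1}$-coefficient of $[D,h]$ equals $c\,[E,h_0]$, whose vanishing forces the two diagonal entries of $h_0$ to coincide; combined with $\det h_0=1$, this gives $h_0=\pm\id$, so that $h$ or $-h$ lies in $\varLambda_+^{\mathbb{R}}\SL_2\mathbb{C}_\sigma$.

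The step I expect to need the most care is the passage $z\to 0$: one must fix a topology on the loop groups in which $P(z,\cdot)\to\id$ holds and in which $\lambda$-positivity is closed. Given the explicit power-series form of $P$ with $P|_{z=0}=\id$, this should be routine. The remaining ingredients -- the commutator expansion via $D^2=0$ and the Vandermonde separation -- are essentially algebraic.
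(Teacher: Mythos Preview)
Your argument is correct, and it reaches the same conclusion by a route that differs in execution from the paper's. Both proofs start from the factorization $\tilde L=\hat L P$ and the nilpotency $D^2=0$, obtaining $W_+=P^{-1}(h-l[D,h]-l^2DhD)P$, and both exploit $P\to\id$ as $z\to 0$. The paper then proceeds entry by entry: it reads off that $h_{12}$ is $\lambda$-positive, feeds this into the diagonal entries to see $h_{11},h_{22}$ are $\lambda$-positive, invokes the determinant condition at $\lambda=0$ to kill the coefficient $h_{12,1}$, and only then can analyse the $(2,1)$-entry to conclude $h_{21}$ is $\lambda$-positive and $h_{11,0}=h_{22,0}$. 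Your Vandermonde/monodromy trick separates the three $\log z$-coefficients in one stroke, so that after the limit you get $h$, $[D,h]$, $DhD$ all $\lambda$-positive simultaneously; the commutator condition $[E,h_0]=0$ then gives $h_0=\pm\id$ directly, without the intermediate determinant step the paper needs. Your approach is cleaner and more conceptual, and would generalize more readily to potentials producing higher-order nilpotent $D$; the paper's hands-on version has the advantage of being completely explicit and avoiding any appeal to closedness of $\varLambda_+$ under limits (though, as you note, this is routine in the Wiener or $C^\infty$ topology).
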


\begin{proof}
If $h = (h_{ij})_{i,j=1}^2$ is in the isotropy group, then 
$\tilde L^{-1} h \tilde L = 
P^{-1}e^{-\log z\cdot D}he^{\log z\cdot D} P \in\varLambda_+
\SL_2\mathbb{C}_{\sigma}$.  
We have
 \[ \tilde L^{-1} h \tilde L = \begin{pmatrix}
  h_{11}+\frac{ch_{12}}{\lambda}\log z&h_{12}\\
  h_{21}-\frac{c\left(h_{11}-h_{22}\right)}{\lambda}
  \log z -\frac{c^2h_{12}^2}{\lambda^2}\left( \log z\right)^2&h_{22}-
  \frac{ch_{12}}{\lambda}\log z
 \end{pmatrix} + \mathcal{O} \; , \] 
where $\mathcal O$ denotes 
terms converging to $0$ as $z \to 0$.  
This matrix lies in $\varLambda_+\SL_2\mathbb{C}_{\sigma}$. 
Considering the $(1,2)$-entry we observe that $h_{12} \in \mathcal{A}_+$, the 
algebra of positive Wiener functions.  Hence we have the expansion 
$h_{12}=h_{12,1}\lambda^{1}+h_{12,3}\lambda^{3}+h_{12,5}\lambda^5+\cdots \;$
for certain constants $h_{12,k}$.

Substituting this into the diagonal terms of the matrix above we see that 
$h_{11}$ and $h_{22}$ do not contain any negative powers of $\lambda$ 
in their Fourier expansions. Moreover, the terms independent of $\lambda$ are 
$h_{11,0} + c h_{12,1} \text{log}(z) + \mathcal{O}$ and 
$h_{22,0} - c h_{12,1} \text{log}(z) + \mathcal{O}$. Since the 
matrix under consideration has
determinant equal to 
$1$ and is in $\varLambda_+\SL_2\mathbb{C}_{\sigma}$, we obtain 
$(h_{11,0} + c h_{12,1} 
\text{log}(z) + \mathcal{O})(h_{22,0} - c h_{12,1} \text{log}(z) + 
\mathcal{O}) = 1$.
Hence $h_{12,1}=0$.

Substituting this  into the $(2,1)$-entry of the matrix 
above we infer that the 
third term is of order 4 in $\lambda$, while the term
$(c (h_{11,0} - h_{22,0}) / \lambda ) \text{log}(z)$ 
cannot be cancelled by $h_{21,-1}$, since the latter 
does not depend on z. Hence $h_{11,0} = h_{22,0}$
and $h_{21}$ does not contain any negative powers of $\lambda$. 
In particular, we 
have shown that h is in $\varLambda_+\SL_2\mathbb{C}_{\sigma}$. Moreover,
using the last equality and the determinant being $1$, we obtain that the 
$\lambda$-independent summand of $h$ is $\pm I$. This proves the claim.
\end{proof}

The next result closely follows arguments from \cite{47}: 

\begin{lemma}\label{lem3pt4}
Suppose $h$ is in the isotropy group of $\tilde L$, and so there 
exists a $W_+$ as in Definition \ref{isotropy}.  
If the upper-right entry of 
$W_+$ is identically zero, then $h = \id$ or $h = -\id$.  
\end{lemma}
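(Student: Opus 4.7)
The plan is to write $W_+ = \tilde L^{-1} h \tilde L = P^{-1}(\hat L^{-1} h \hat L) P$ and exploit the hypothesis that the $(1,2)$-entry of $W_+$ vanishes identically. The matrix $\hat L^{-1} h \hat L$ has already been computed in the proof of Lemma \ref{isotropylemma}: its entries are polynomials in $\log z$ of degree at most two whose coefficients are built from the entries $h_{ij}$ and powers of $\lambda^{-1}c$. Because $P$ is single-valued in $z$, has $\det P = 1$, and satisfies $P|_{z=0} = \id$, conjugation by $P$ introduces no new $\log z$-terms. Hence $(W_+)_{12}$ itself is a polynomial in $\log z$ of degree at most two whose coefficients are single-valued functions of $z$ (and $\lambda$).

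The key step is to separate the powers of $\log z$. Since $W_+$ is single-valued in $z$, applying the monodromy $\log z \mapsto \log z + 2\pi i$ to the identity $(W_+)_{12} = 0$ forces the $(\log z)^{2}$-coefficient, then the $\log z$-coefficient, and finally the constant coefficient to vanish separately — equivalently, $1,\log z,(\log z)^{2}$ are linearly independent over the ring of single-valued holomorphic functions on a punctured disk. This is the one genuinely conceptual point in the argument; everything that follows is mechanical.

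Using $(P^{-1})_{11} = P_{22}$ and $(P^{-1})_{12} = -P_{12}$, one obtains
\[
(W_+)_{12} \;=\; P_{12}P_{22}(A_{11}-A_{22}) + P_{22}^{2}A_{12} - P_{12}^{2}A_{21},
\]
where $A = \hat L^{-1} h \hat L$. Substituting the explicit $A_{ij}$, the $(\log z)^{2}$-coefficient is a nonzero scalar multiple of $P_{12}^{2}h_{12}$, and since $P_{12} = \lambda^{-1}z + O(z^{2})$ is not identically zero in $z$, this forces $h_{12} \equiv 0$. With $h_{12} = 0$ the $\log z$-coefficient reduces to a nonzero multiple of $P_{12}^{2}(h_{22}-h_{11})$, giving $h_{11} = h_{22}$; the constant coefficient then collapses to $-P_{12}^{2}h_{21}$, giving $h_{21} = 0$. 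Thus $h$ is diagonal with equal diagonal entries, so $\det h = 1$ forces $h_{11}^{2} = 1$, i.e.\ $h = \pm\id$.
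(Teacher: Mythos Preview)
Your argument is correct, but it proceeds by a genuinely different route from the paper's. The paper does not touch the explicit factorisation $W_+ = P^{-1}(\hat L^{-1}h\hat L)P$ at all; instead it differentiates the relation $h\tilde L=\tilde L W_+$ to obtain the gauge equation $dW_+ = W_+\xi-\xi W_+$, which for the entries of $W_+=\bigl(\begin{smallmatrix}a&b\\c&d\end{smallmatrix}\bigr)$ reads $\lambda\partial_z b=a-d$, $\lambda z\,\partial_z c=d-a$, $\lambda\partial_z a=b/z-c$. From $b\equiv 0$ one gets $a=d$, hence $a^{2}=\det W_+=1$, hence $a=d=\pm1$ is constant, and then $c=0$. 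Thus $W_+=\pm\id$ and $h=\pm\id$ in three lines.

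Your approach instead expands $(W_+)_{12}$ as a quadratic polynomial in $\log z$ with coefficients in the ring of single-valued holomorphic functions, and kills the coefficients one by one using the linear independence of $1,\log z,(\log z)^2$. The computation is right: the $(\log z)^2$-coefficient is $\lambda^{-2}c^{2}P_{12}^{2}h_{12}$, and $P_{12}=\lambda^{-1}z+O(z^2)\not\equiv0$ forces $h_{12}=0$; then the $\log z$-coefficient is $\lambda^{-1}cP_{12}^{2}(h_{11}-h_{22})$, giving $h_{11}=h_{22}$; then the constant term is $-P_{12}^{2}h_{21}$, giving $h_{21}=0$. One small phrasing issue: you justify separating the $\log z$-powers by saying ``$W_+$ is single-valued in $z$'', but this is not known a priori---what you actually use (and what suffices) is that $(W_+)_{12}$ is \emph{identically zero}, so the monodromy argument applies to the equation $0=f_0+f_1\log z+f_2(\log z)^2$ directly.

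The trade-off: the paper's ODE method is shorter and needs nothing about the explicit series for $P$, only that $\tilde L$ solves $d\tilde L=\tilde L\xi$; your method is more algebraic and self-contained, reusing the $\log z$-structure already exhibited in Lemma~\ref{isotropylemma}, and in fact anticipates the $\log z$-independence trick that the paper later deploys in the proof of Theorem~\ref{firsttheorem}.
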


\begin{proof} 
We note that the constant $c$ in $\xi$ can be removed by some constant gauge
and a coordinate transformation.  We therefore assume that $c = 1$.  
We have $h \tilde L = \tilde L W_+$, and we write the components of 
$W_+$ as (now the notation "$c$" plays a different role) 
\[ W_+ = \begin{pmatrix}
a & b \\ c & d 
\end{pmatrix} \; . \]
Then 
\[ 
W_+ \cdot \lambda^{-1} \begin{pmatrix}
0 & 1 \\ \frac{1}{z} & 0 
\end{pmatrix} = 
W_+ \cdot \tilde L^{-1} \partial_z \tilde L = 
W_+ \cdot (h \tilde L)^{-1} \partial_z (h \tilde L) = \]\[ 
W_+ \cdot (\tilde{L} W_+)^{-1} \partial_z (\tilde L W_+) = 
\lambda^{-1} \begin{pmatrix}
0 & 1 \\ \frac{1}{z} & 0 \end{pmatrix} W_+ +  \partial_z W_+ \; . \]  
Thus 
\begin{equation}\label{eqn:app4proof-1}
\lambda \partial_z a = - \lambda \partial_z d = \frac{b}{z} - c 
\end{equation}
and 
\begin{equation}\label{eqn:app4proof-2}
\lambda \partial_z b = - \lambda z \partial_z c = a - d 
\end{equation}
hold.  Now if $b=0$, then Equations \eqref{eqn:app4proof-1}
and \eqref{eqn:app4proof-2} give that $a=d=\pm 1$ and $c=0$.  
Therefore, $W_+=\pm \id$ and so $h = \pm \id$.  
\end{proof}

We now give a proof of Theorem \ref{firsttheorem}.  

\begin{proof}
It suffices to prove the result for one particular solution, 
as different solutions give conjugate isotropy groups, 
so let us take the solution $\tilde L$ as given above.  
We give a proof by contradiction.  Suppose $h$ is in the isotropy 
group, but not in $\{ \pm \id \}$.  The previous 
two lemmas 
imply we may assume that $h \in \varLambda_+\SL_2\mathbb{C}_{\sigma}$ 
and that $b$  is not identically zero for the matrix 
\[ W_+ = \begin{pmatrix}
a & b \\ c & d 
\end{pmatrix} \]
corresponding to $h$.  
Equations \eqref{eqn:app4proof-1} and \eqref{eqn:app4proof-2}
imply that 
\begin{equation}\label{eqn:app4proof-3} 
\frac{\lambda^2}{2} \partial_z^3 b + 
\frac{1}{z^2} b- \frac{2}{z} \partial_z b = 0 \end{equation}
holds.  Now we consider the expansion 
$b = \sum_{n=0}^\infty b_n(z) \lambda^n$ and choose $N 
\in \mathbb{Z}_+ \cup \{ 0 \}$ so that 
$b_n = 0$ for all $n<N$ and $b_N \neq 0$.  Then Equation 
\eqref{eqn:app4proof-3} implies 
\[ 2 z \partial_z b_N = b_N \] and thus 
$b_N = c_1 \sqrt{z}$ for some constant $c_1$.  
However, $W_+=\tilde{L}^{-1} h \tilde L$, and 
$\tilde{L}^{-1} h \tilde L$ is comprized of only products and 
sums of holomorphic functions and $\log z$ 
and $(\log z)^2$, so in particular $b_N = c_1 \sqrt{z} = 
f_1(z) + f_2(z) \log z + f_3(z) (\log z)^2$ for functions $f_j$ 
that are holomorphic at $z=0$.  That this is a contradiction can 
be seen as follows:  Letting $\tau$ be the deck transformation 
associated to a counterclockwise loop about the origin in 
$\C \setminus \{ 0 \}$, and applying $\tau$ twice, 
$\sqrt{z}$ is invariant, and so 
\[ f_1 + f_2 \cdot \log z + f_3 \cdot (\log z)^2 = 
   f_1 + f_2 \cdot (\log z+4 \pi i) + f_3 \cdot (\log z+4 \pi i)^2 
\; . \]  Thus 
\[ f_2 + 4 \pi i f_3 + 2 f_3 \cdot \log z = 0 \; . \]  
However, $\log z$ is not holomorphic at $z=0$, so $f_3$ must 
be identically zero, and then $f_2$ must be zero as well.  Thus 
$\sqrt{z} = f_1$, but $\sqrt{z}$ is also not holomorphic at $z=0$, 
providing the contradiction.  
\end{proof}

\section{Nonclosing of the resulting surfaces}

In this section we prove a result about the corresponding surfaces.
We recall that the potential $\xi$ considered throughout
produces surfaces of three different types via the DPW method.

\begin{theorem}\label{2}
For the potential \[ \xi = \lambda^{-1}\begin{pmatrix}
  0&1\\ c z^{-1} &0    \end{pmatrix} dz,
\] where $c \in \C \setminus{0}$, no 
resulting immersion of any of the three integrable surface types 
can be well-defined on any annular domain $\Sigma$ 
in $\mathbb{C} \setminus \{ 0 \}$ with nontrivial winding order 
about $z=0$, for any value of the associated spectral parameter 
in $\mathbb{S}^1 = \{ \lambda \in \mathbb{C} \, | 
\, |\lambda| = 1 \}$.
\end{theorem}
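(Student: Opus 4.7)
The plan is to invoke the dressing-isotropy triviality of Theorem \ref{firsttheorem}. First, I would compute the monodromy of $\tilde L$. Because $\tilde L = \hat L \cdot P = e^{\log z \cdot D} \cdot P$ with $P$ single-valued near $z=0$ (and $P(0)=\id$) and $D^2 = 0$, the deck transformation $\tau$ sending $\log z \mapsto \log z + 2\pi i$ acts by
\[
\tau \tilde L \;=\; e^{2\pi i D}\,\tilde L \;=\; M\,\tilde L, \qquad M \;=\; \id + 2\pi i D \;=\; \begin{pmatrix} 1 & 0 \\ 2\pi i c\,\lambda^{-1} & 1 \end{pmatrix} .
\]
Since $c \ne 0$, the loop $M \in \varLambda \SL_2\mathbb{C}_{\sigma}$ is neither $\id$ nor $-\id$.

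Next, I would suppose for contradiction that some immersion $f(\cdot, \lambda_0)$ is well-defined on an annulus $\Sigma \subset \mathbb{C}\setminus\{0\}$ winding nontrivially about the origin. Iwasawa-split $\tilde L = F B$, or $\tilde L = F \omega B$ in the non-unitary $\mathcal B_2$-cell of the $\SU(1,1)$ case, and read the resulting immersion off the appropriate Sym(--Bobenko) formula. Well-definedness on $\Sigma$ forces the monodromy of $F$ under $\tau$ to be trivial up to sign, that is $\tau F = \pm F$. Substituting into $\tau \tilde L = M \tilde L$ and using $\tau \tilde L = \tau F \cdot \tau B$ yields
\[
M\,\tilde L \;=\; \pm F \cdot \tau B \;=\; \pm \tilde L \cdot \bigl(B^{-1}\, \tau B\bigr),
\]
and since both $B$ and $\tau B$ lie in $\varLambda_+\SL_2\mathbb{C}_{\sigma}$, so does $W_+ := \pm B^{-1}\tau B$. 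Hence $M$ belongs to the dressing isotropy group of $\tilde L$ in the sense of Definition \ref{isotropy}, and Theorem \ref{firsttheorem} forces $M = \pm \id$ --- contradicting the explicit nonzero off-diagonal entry $2\pi i c\lambda^{-1}$ computed in the first step.

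The main obstacle is justifying the passage from ``closure of $f(\cdot, \lambda_0)$ on $\Sigma$'' to the loop identity $\tau F = \pm F$. A priori, the closure condition at a single $\lambda_0 \in \mathbb{S}^1$ is only a pointwise constraint at $\lambda_0$ (together with a $\partial_\lambda$--translation condition coming from the Sym-type formula), so upgrading it to an equality of loops as required above demands additional input: the specific form of the Sym--Bobenko (resp.\ Sym) formula in each target-space case, the structure of the Iwasawa cell containing $\tilde L$ with the extra $\omega$-factor in $\mathcal B_2$, and the structural constraints on isotropy elements recorded in Lemma \ref{isotropylemma}. Once this step is secured uniformly for $G = \SU(2)$, $\SU(1,1)$ and $\SL_2 \R$, the reduction to Theorem \ref{firsttheorem} and the resulting contradiction with the explicit form of $M$ complete the proof.
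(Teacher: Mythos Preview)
Your argument has a genuine gap at precisely the step you flag as the main obstacle: from closure of the immersion at a single $\lambda_0 \in \mathbb{S}^1$ you cannot conclude the loop identity $\tau^* F = \pm F$. Closure at $\lambda_0$ gives only pointwise conditions at $\lambda = \lambda_0$ (namely $\mathcal{M}_F|_{\lambda_0} = \pm \id$ together with, in the CMC cases, the $\partial_\lambda$-condition); for generic $\lambda \in \mathbb{S}^1$ the associated-family member does not close and there is no reason for $\mathcal{M}_F$ to be $\pm\id$ there. So the route ``$M$ lies in the isotropy group of $\tilde L$, hence $M = \pm\id$'' does not go through, and the vague list of ``additional input'' you offer does not bridge this.

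The paper applies Theorem~\ref{firsttheorem} in a structurally different way. For a general solution $L = C\tilde L$ (your argument only treats $C=\id$, whereas the theorem concerns \emph{all} resulting immersions), well-definedness of the surface on $\Sigma$ makes $F^{-1}dF$ single-valued, so $\tau^* F = \mathcal{M}_F F$ with a $z$-independent $\mathcal{M}_F \in \varLambda G_\sigma$. Combining $\tau^* L = \mathcal{M}_L L$ with $\tau^* L = \mathcal{M}_F F \cdot \tau^* B$ gives $(\mathcal{M}_F^{-1}\mathcal{M}_L)\, L = L \cdot (B^{-1}\tau^* B)$, and it is to the element $\mathcal{M}_F^{-1}\mathcal{M}_L$ that Theorem~\ref{firsttheorem} is applied, yielding $\mathcal{M}_F = \pm\mathcal{M}_L$. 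The contradiction is then extracted \emph{pointwise}: for $G=\SU(2)$ or $\SU(1,1)$ the Sym--Bobenko closing conditions force $\mathcal{M}_L|_{\lambda_0} = \pm\id$, impossible since $\mathcal{M}_L = C\mathcal{M}_{\tilde L}C^{-1}$ is conjugate to the unipotent $\mathcal{M}_{\tilde L}$; for $G = \SL_2\R$ already the conclusion $\mathcal{M}_L \in \varLambda G_\sigma$ with $C \in \varLambda_+^{\mathbb{R}}\SL_2\mathbb{C}_\sigma$ is contradictory. In short, the isotropy theorem is used to identify $\mathcal{M}_F$ with $\pm\mathcal{M}_L$, not to force either monodromy to be $\pm\id$ as a loop.
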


To prove this theorem, we again assume without loss of generality 
that $c=1$ (as in the proof of Lemma \ref{lem3pt4}), and we will 
suppose there exists a solution $L = C\tilde{L}$ defined on 
$\tilde \Sigma$ (the universal cover of $\Sigma$), for some 
$C\in\varLambda\SL_2\mathbb{C}_{\sigma}$, of $dL=L\xi$ with 
Iwasawa splitting (with respect to $G$) 
\[ L=F\cdot B \] 
so that the frame $F$ produces, at some $\lambda=\lambda_0 
\in \mathbb{S}^1$, a well-defined CMC or CGC immersion on an annular 
region $\Sigma$ with nontrivial winding number about $z=0$, and 
then find a contradiction.  

\begin{remark}
If the ambient space is $\mathbb{R}^3$, then the 
Iwasawa splitting for $\SU(2)$ is global \cite{49}, and so the 
domain of definition of the surface can be extended from 
$\Sigma$ to all of $\mathbb{C} \setminus \{ 0 \}$.  
In the other two cases (surfaces in $\mathbb{L}^{3}$), however, the 
Iwasawa splitting is not global and one can expect to encounter 
singularities on the surface, equivalently one can expect to leave the region 
where Iwasawa splittings exist, as one extends $\Sigma$ to 
larger domains within $\mathbb{C} \setminus \{ 0 \}$ (see 
\cite{48}).  For this 
reason the restriction to annular regions $\Sigma$ smaller than 
$\mathbb{C} \setminus \{ 0 \}$ is necessary in Theorem \ref{2}.  
\end{remark}

\begin{remark}
In the case of $G=\SU(1,1)$, the splitting can take two 
possible forms: either $L=FB$ or $L = F \omega B$, where 
$\omega$ is as in Section \ref{sect2}, see \cite{48}, \cite{51}.  
However, if we find that $L$ satisfies the second form, 
we can replace $L$ with $\omega^{-1} L$ (this changes the 
resulting surface only by 
a rigid motion) and $\omega^{-1} F \omega$ with $F$ to switch 
over to the first form.  So without loss of generality we may 
assume the first form.  
\end{remark}

Now, $C$ can be Iwasawa decomposed into parts 
$C_u\in\varLambda G_{\sigma}$ and 
$C_+\in\varLambda_+^{\mathbb{R}}\SL_2\mathbb{C}_{\sigma}$, i.e.
$$C=C_u\cdot C_+ \;\;\; \text{or} \;\;\; C=C_u \omega \cdot C_+ \;.$$ 
The $C_u$ or $C_u \omega$ part only moves the resulting surface in 
$\mathbb{R}^3$ or $\mathbb{L}^{3}$ by a rigid motion, so we can take 
$C$ to be in $\varLambda_+^{\mathbb{R}}\SL_2\mathbb{C}_{\sigma}$ without 
loss of generality (the possibility of $C_u \omega$ occurs only in the 
$G=\SU(1,1)$ case.).  

Let $\tau$ be the deck transformation 
associated to a counterclockwise loop about the origin in 
$\Sigma$.  Let $\mathcal{M}_{L}$, 
$\mathcal{M}_{\tilde L}$ ($\tilde L$ as in Equation 
\eqref{smythp}) and $\mathcal{M}_F$ be the monodromies 
of $L$, $\tilde L$ and $F$, respectively, with respect to the 
deck transformation $\tau$. 
That is to say, under the deck transformation $\tau$, we have the 
following transformations: 
\[ L \to \tau^* L = \mathcal{M}_{L} \cdot L \; , \;\;\; \tilde{L} \to 
\tau^* \tilde{L} = \mathcal{M}_{\tilde{L}} \cdot \tilde{L} \; , 
\;\;\; F \to \tau^* F = \mathcal{M}_{F} \cdot F \; . \]
Because $\tau^* P = P$, we have 
\begin{equation}\label{eq:mono}
\mathcal{M}_{\tilde L}
=\hat L\left(\tau(1),\lambda\right) 
\cdot \left(\hat L(1,\lambda)\right)^{-1}\\
=\hat L\left(\tau(1),\lambda\right)\\
=\begin{pmatrix}
                     1&0\\
                     2\pi i \lambda^{-1}&1
                    \end{pmatrix} \; . 
\end{equation}
The monodromy $\mathcal{M}_{L}$ is conjugate to 
$\mathcal{M}_{\tilde{L}}$ under conjugation by $C$, i.e.
\[ \mathcal{M}_{L}=C\cdot \mathcal{M_{\tilde{L}}}\cdot C^{-1} \; . \]

\begin{remark}
Note that $\mathcal{M}_F$ is a monodromy 
because the resulting immersion is assumed to be 
well-defined on $\mathbb{C} \setminus \{ 0 \}$, that is, 
$\mathcal{M}_F$ does not depend on $z$.  In other words, we have the 
following statement:  Given a CMC surface well-defined on $\Sigma$, 
the Maurer-Cartan form $F^{-1} dF$ of its extended frame $F$ 
is invariant under $\tau$. 
\end{remark}

\begin{lemma}
$\mathcal{M}_F = \pm \mathcal{M}_L$.  In particular, 
$\mathcal{M}_L$ lies in ${ \varLambda G }_{\sigma}$.  
Moreover, $\tau^* B = \pm B$.  
\end{lemma}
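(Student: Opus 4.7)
The plan is to reduce the lemma to Theorem \ref{firsttheorem} by recognizing $\mathcal{M}_F^{-1}\mathcal{M}_L$ as an element of the dressing isotropy group of $L$. First I would equate the two available descriptions of $\tau^* L$: writing $\tau^* L = \mathcal{M}_L\, FB$ on one side, and $\tau^* L = (\tau^* F)(\tau^* B) = \mathcal{M}_F\, F \cdot \tau^* B$ on the other, and then solving for $\tau^* B$, I obtain
\[
V \;:=\; (\tau^* B) \cdot B^{-1} \;=\; F^{-1}\, \mathcal{M}_F^{-1}\, \mathcal{M}_L\, F .
\]
Since $\tau$ preserves holomorphy on the closed unit disk and the positive-real normalization of the diagonal at $\lambda = 0$, both $B$ and $\tau^* B$ lie in $\varLambda_+^{\mathbb{R}}\SL_2\mathbb{C}_{\sigma}$, and hence so does $V$.

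Next I would set $h := \mathcal{M}_F^{-1}\mathcal{M}_L$ and verify directly that $h$ lies in the isotropy group of $L$ in the sense of Definition \ref{isotropy}. Using the identity $FV = \mathcal{M}_F^{-1}\mathcal{M}_L F$ just derived,
\[
hL \;=\; \mathcal{M}_F^{-1}\mathcal{M}_L\, FB \;=\; FVB \;=\; L \cdot (B^{-1} V B),
\]
with $W_+ := B^{-1} V B \in \varLambda_+\SL_2\mathbb{C}_{\sigma}$. Theorem \ref{firsttheorem} then forces $h = \pm\id$, i.e.\ $\mathcal{M}_L = \pm\mathcal{M}_F$.

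From this the remaining assertions are immediate: $\mathcal{M}_F = (\tau^* F)\, F^{-1}$ lies in $\varLambda G_{\sigma}$ because $F$ and $\tau^* F$ do, so $\mathcal{M}_L = \pm\mathcal{M}_F$ also lies in $\varLambda G_{\sigma}$; and substituting $V = F^{-1}(\pm\id)F = \pm\id$ into the identity $\tau^* B = V \cdot B$ gives $\tau^* B = \pm B$. The only genuine step is the initial manipulation producing $V$; the observation that its algebraic expression lies automatically in $\varLambda_+^{\mathbb{R}}\SL_2\mathbb{C}_{\sigma}$ is precisely what channels the problem into the dressing-isotropy framework of Theorem \ref{firsttheorem}. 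I do not anticipate any further difficulty, since the hard analytic work has already been done there and what remains is purely formal.
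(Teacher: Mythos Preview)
Your argument is correct and follows essentially the same route as the paper: from $\tau^*L = \mathcal{M}_L L$ and $\tau^*L = \mathcal{M}_F F \cdot \tau^*B$ one obtains $\mathcal{M}_F^{-1}\mathcal{M}_L \cdot L = L\,(B^{-1}\tau^*B)$ with $B^{-1}\tau^*B$ a positive loop, and Theorem \ref{firsttheorem} then forces $\mathcal{M}_F^{-1}\mathcal{M}_L = \pm\id$. Your intermediate quantity $V = (\tau^*B)B^{-1}$ is a harmless detour, since $B^{-1}VB = B^{-1}\tau^*B$ is exactly the paper's $W$.
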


\begin{proof}
$\tau^* L = \mathcal{M}_L \cdot L = 
\tau^* F \cdot \tau^* B = \mathcal{M}_F \cdot L B^{-1} \cdot \tau^* B$ 
implies $\mathcal{M}_F^{-1} \mathcal{M}_L \cdot 
L = L W$, where $W = B^{-1} \cdot \tau^* B$ is a positive loop.  
By Theorem \ref{firsttheorem}, the isotropy group is 
$\{ \pm \id \}$, implying the lemma.  
\end{proof}

Note that 
\[ \tau^* f = \mathcal{M}_L f \mathcal{M}_L^{-1} + H^{-1} \partial_t 
\mathcal{M}_L \cdot \mathcal{M}_L^{-1} \] 
for $G = \SU(2)$ or $G=\SU(1,1)$, and for 
$\lambda =e^{i t}$.  The term $\mathcal{M}_L f \mathcal{M}_L^{-1}$ 
represents a rotation of $f$, and the term $\partial_t 
\mathcal{M}_L \cdot \mathcal{M}_L^{-1}$ represents a translation.  
Thus if $\tau^* f = f$ for $\lambda = \lambda_0=e^{i t_0}$, we have 
\begin{equation}\label{trinoidstyleclosingconditions}
\mathcal{M}_L|_{\lambda=\lambda_0} = \pm \id \; , \;\;\; 
\partial_t \mathcal{M}_L|_{\lambda=\lambda_0} = 0 \; . 
\end{equation}

When $G=\SL_2 \R$, we have 
\[ \tau^* f = \mathcal{M}_L f \mathcal{M}_L^{-1} 
- \partial_t \mathcal{M}_L \cdot \mathcal{M}_L^{-1} \; . \] 

We finally prove Theorem \ref{2}.  

\begin{proof}
By way of contradiction, assume there exists a solution $L$ that 
gives such a surface for the case of 
$G=\SU(2)$ or $G=\SU(1,1)$.  Then $\mathcal{M}_{\tilde L}$ as in 
\eqref{eq:mono} implies $\mathcal{M}_L = C \cdot 
\mathcal{M}_{\tilde L} \cdot C^{-1}$ is never $\pm \id$ at any 
$\lambda_0$.  But if $f|_{\lambda_0}$ closes to be well-defined on 
an annulus, then Equation \eqref{trinoidstyleclosingconditions} 
implies that $\mathcal{M}_L$ must be $\pm \id$ at $\lambda_0$.  This 
contradiction proves the theorem for these two cases.  

In the third case $G=\SL_2 \R$, $\mathcal{M}_{\tilde L}$ as in 
\eqref{eq:mono} is conjugated by an element $C \in 
\varLambda_+^{\mathbb{R}}\SL_2\mathbb{C}_{\sigma}$ to 
$\mathcal{M}_{F} \in \varLambda G_{\sigma}$, which is a contradiction.  
\end{proof}

\section{A remark on the more general potentials $\xi_k$}

Finally, in this section we remark on the behavior that results with 
the more general potential 
\[ \xi_k = \lambda^{-1} \begin{pmatrix} 0 & 1 \\ 
 c z^{k} & 0 \end{pmatrix} dz \; , \;\;\; k \in \mathbb{Z} \; , \;\;\; 
c \in \mathbb{C} \setminus \{ 0 \} \; . \]  
We will see that any value of $k$ other than $-1$ can produce CMC 
surfaces in $\R^3$ that close on annular domains  
in $\mathbb{C} \setminus \{ 0 \}$ with nontrivial winding order 
about $z=0$.  In the case of the symmetric spaces $G/K$ with 
$G = \SU(1,1)$ or $G=\SL_2 \R$, and
$K$ the subgroup of diagonal matrices in $G$, the question addressed 
in Theorem \ref{firsttheorem2} 
seems to be more complicated and shall not be discussed here.

\begin{theorem}\label{firsttheorem2}
Up to gauge transformations and admissible coordinate 
changes of the resulting CMC surfaces in $\R^3$, the potential 
$\xi_k$ can be taken so that $k \geq -2$.  Furthermore, amongst 
the cases $k \in Z \cap (-2,\infty)$, any case other than $k=-1$ 
will produce CMC surfaces that close on annular domains  
in $\mathbb{C} \setminus \{ 0 \}$ with nontrivial winding order 
about $z=0$.  
\end{theorem}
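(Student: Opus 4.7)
The plan is to address the two assertions of the theorem separately: first the reduction of any $\xi_k$ to a representative with exponent $\geq -2$, and second the closing of the resulting CMC surfaces for $k \in \{0,1,2,\ldots\}$.

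For the reduction, I would use the admissible coordinate inversion $z = 1/w$ on $\C \setminus \{0\}$. Pulling $\xi_k$ back through this change gives
\[ \eta = \lambda^{-1}\begin{pmatrix} 0 & -w^{-2} \\ -c w^{-k-2} & 0 \end{pmatrix} dw, \]
which is not yet of Smyth-type form. I would then apply a $\varLambda_+\SL_2\C_\sigma$-gauge of the triangular shape
\[ W_+ = \begin{pmatrix} p(w) & 0 \\ \lambda\,r(w) & 1/p(w) \end{pmatrix}, \]
whose $\lambda^1$ off-diagonal entry is just the piece needed to cancel the $\lambda^0$-diagonal contribution that $W_+^{-1} dW_+$ would otherwise introduce. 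Matching $W_+^{-1}\eta W_+ + W_+^{-1} dW_+$ to $\xi_{-k-4}$ entry by entry forces $p = iw^{-1}$ from the $(1,2)$-slot and $r = -i$ from the vanishing-diagonal condition; the remaining $\lambda^1$-consistency identity $r^2 w^{-2} + pr' - rp' = 0$ is then verified directly. Since $W_+ \in \varLambda_+\SL_2\C_\sigma$, right-multiplying $L$ by $W_+$ leaves the unitary factor $F$ of the Iwasawa splitting unchanged, so $\xi_k$ and $\xi_{-k-4}$ produce the same CMC surface up to the reparametrization $z = 1/w$. Because $k \leq -3$ forces $-k-4 \geq -1$, and because $k \geq -2$ needs no reduction, every $\xi_k$ may be taken to have $k \geq -2$.

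For the closing claim, in the cases $k \in \{0,1,2,\ldots\}$ the potential $\xi_k$ is globally holomorphic on $\C$. Solving $dL = L\xi_k$ with any initial condition at $z = 0$ therefore yields an $L$ that is holomorphic and single-valued on all of $\C$, with trivial monodromy around $z = 0$. Since the $\SU(2)$-Iwasawa splitting is global (as invoked in the remark following Theorem \ref{2}), the unitary factor $F$ is defined on all of $\C$ at every $\lambda \in \mathbb{S}^1$, and the Sym-Bobenko formula yields a CMC immersion $f : \C \to \R^3$. Restricting $f$ to any annular subdomain $\Sigma \subset \C \setminus \{0\}$ then gives a surface closing with arbitrary winding number about $z = 0$, as required.

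The main obstacle is guessing the correct gauge $W_+$ in the reduction step. The naive $\lambda$-independent diagonal gauge $\operatorname{diag}(iw^{-1},-iw)$ already rectifies the $\lambda^{-1}$ off-diagonal entries of $\eta$ into those of $\xi_{-k-4}$, but its Maurer--Cartan contribution produces a stray $\operatorname{diag}(-w^{-1},w^{-1})\,dw$ that no further $\lambda$-independent diagonal gauge can absorb without re-introducing unwanted off-diagonal entries. The triangular $\lambda^1$-correction $-i\lambda$ in the $(2,1)$-slot of $W_+$ is precisely what the cross-term $W_+^{-1}\eta W_+$ needs in order to eat that stray diagonal term without spoiling the other entries, and this one observation is what makes the reduction go through.
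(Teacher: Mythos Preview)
Your argument is correct and matches the paper's approach: the same inversion $z\mapsto 1/w$ followed by the same lower-triangular gauge (your $W_+$ is exactly the paper's $p_+$) gives the equivalence $\xi_k\sim\xi_{-k-4}$, and for $k\geq 0$ the paper likewise appeals to the fact that the Smyth potentials are entire so the resulting immersion is globally defined on $\C$. The only point on which the paper goes further is that it also treats $k=-2$ explicitly, gauging $\xi_{-2}$ (via $p_{+,1},\dots,p_{+,5}$) to the cylinder potential $\sqrt{c}\,\lambda^{-1}\bigl(\begin{smallmatrix}0&1\\1&0\end{smallmatrix}\bigr)\tfrac{dz}{z}$; this lies outside the interval $(-2,\infty)$ appearing in the theorem's second sentence, so your omission is not a gap, but it is the natural completion of the picture and you may wish to include it.
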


\begin{proof}
Choose a solution $L$ to $L^{-1} dL=\xi$ and a gauge $p_+\in
\varLambda_+\SL_2\mathbb{C}$, where $p_+$ is allowed to depend on $z$.  
Changing $L$ to $\tilde L = Lp_+$ gives a solution to $\tilde L^{-1} 
d\tilde L = \tilde \xi$, where 
\begin{equation}\label{gauge}
\tilde \xi =  p_+^{-1}\xi p_+ + p_+^{-1} dp_+ \; . 
\end{equation}
Thus the holomorphic potentials $\xi$ and $\tilde \xi$ make the 
same collection of surfaces via the DPW method.  

Applying the transformation $z \to \frac{1}{z}$, $\xi$ changes to 
\begin{equation}\label{g1}
                        \lambda^{-1}
                        \begin{pmatrix}
                         0&-z^{-2}\\
                         -cz^{-k-2}&0
                        \end{pmatrix}dz \; , 
\end{equation}
and gauging this resulting potential with 
\[ p_{+}=\begin{pmatrix}
         i z^{-1}&0\\
         -i\lambda & -i z
        \end{pmatrix}\in\SL_2 \mathbb{C} \; , 
\]
we get the potential 
\begin{equation}\label{g2}
p_+ ^{-1}\xi p_+ +p_+^{-1}dp_+=\lambda^{-1} \begin{pmatrix}
                                   0&1\\
                                   cz^{-k-4}&0
                                  \end{pmatrix}dz \; . 
\end{equation}
This implies that the cases $k$ and $-k-4$ produce the same surfaces.  
Also, by gauging and coordinate changes, we can see that we 
may assume $c > 0$.  
So without loss of generality, we can restrict to \[ c > 0 \] and 
\[ k \geq -2 \; . \]  

For $k \geq 0$, Smyth surfaces can be produced, which of course are 
well-defined immersions on annular regions with nontrivial winding order 
about $z=0$.  (And in fact, they can extend to $z=0$.)  

The situation for $\xi_{-1}$ is already established in Theorem \ref{2}.  

For $\xi_{-2}$, gauging with 
\[ p_{+,1}=\begin{pmatrix}
         1&0\\
         -\tfrac{\lambda}{2z} & 1
        \end{pmatrix}\in\SL_2 \mathbb{C} \; , 
\]
and then 
\[ p_{+,2} = \begin{pmatrix}
\sqrt{z} & 0 \\ 0 & \frac{1}{\sqrt{z}} 
\end{pmatrix} \in \SL_2 \mathbb{C} \; , 
\]
and then by an appropriate constant diagonal $p_{+,3}$ 
gives the potential 
\[ \sqrt{c} \begin{pmatrix}
0 & \lambda^{-1} \\ 
\lambda^{-1} + \frac{\lambda}{4 c} & 0 
\end{pmatrix} \frac{dz}{z} \; . \]
Further gauging by 
\[ p_{+,4} = \begin{pmatrix}
\Omega^{3/4} & \frac{\lambda}{2 \sqrt{c}} \Omega^{-1/4} \\ 
\frac{\lambda}{2 \sqrt{c}} \Omega^{1/4} & \Omega^{1/4} 
\end{pmatrix}^{-1} \; , \;\;\; \Omega = 1+\frac{\lambda^2}{4c} \; , 
\] and then by \[ p_{+,5} = \exp \left( 
\lambda^{-1} \sqrt{c} (1 - \sqrt{\Omega}) \log z \cdot 
\begin{pmatrix}
0 & 1 \\ 1 & 0 
\end{pmatrix} \right) \] shows that we may assume 
the potential is 
\[ \sqrt{c} \lambda^{-1} \begin{pmatrix}
0 & 1 \\ 1 & 0 
\end{pmatrix} \frac{dz}{z} \; , \]  
and we can choose the CMC surface to be a cylinder 
in $\mathbb{R}^3$, which can of course close to become annular.  

This completes the proof.  
\end{proof}

\end{document}